\theoremstyle{definition}
\newtheorem{definition}{Definition}
\theoremstyle{theorem}
\newtheorem{theorem}[definition]{Theorem}
\theoremstyle{proposition}
\newtheorem{proposition}[definition]{Proposition}
\theoremstyle{lemma}
\theoremstyle{definition}
\newtheorem{example}[definition]{Example}
\theoremstyle{corollary}
\newtheorem{corollary}[definition]{Corollary}
\theoremstyle{conjecture}
\begin{document}
 
% --------------------------------------------------------------
%                         Start here
% --------------------------------------------------------------
 
%\renewcommand{\qedsymbol}{\filledbox}
 
\title{Commutative graded monads and localisable monads}%replace X with the appropriate number
\author{\textbf{Rowan Poklewski-Koziell}}
\affil{University of Cape Town, South Africa}
\date{November 2021}
\maketitle
 
\section{Background}
 
Let us examine the following diagram of ``specialisations'' and ``generalisations'':

\begin{equation*}
\begin{tikzpicture}[scale = 1.5]

\node (U) at (0, 0) {Commutative monads};
\node (W) at (-2, 1.4) {Monads};
\node (V) at (2, 2.6) {
\shortstack{\textcolor{red}{Commutative} \\\textcolor{red}{graded monads}}
};
\node (Z) at (0, 4) {Graded monads};

\path[->,font=\scriptsize,>=angle 90]
(U) edge (W)
(U) edge[dashed] (V)
(W) edge (Z)
(V) edge[dashed] (Z);
\end{tikzpicture}
\end{equation*}

This is explained as follows: as originally observed by J. Ben\'abou \cite{Ben67}, a \textit{monad} in a bicategory $\mathcal{B}$ is nothing but a lax 2-functor $\mathbf{1} \longrightarrow \mathcal{B}$. This is the ``Monads'' node on the left. Specialising to ``Commutative monads'', these are exactly lax 2-functors $\mathbf{1} \longrightarrow \mathsf{MonCat}$, where $\mathsf{MonCat}$ is the $2$-category of monoidal categories, lax monoidal functors and monoidal natural transformations: they are \textit{monoidal} monads (see also, for example, \cite{Kock1972}).

We may generalise monads to ``Graded monads'' over arbitrary bicategories $\mathcal{B}$ as follows: for a monoidal category $\mathsf{M}$, an $\mathsf{M}$-\textit{graded monad over} $\mathcal{B}$ is a lax 2-functor $\Sigma(\mathsf{M}) \longrightarrow \mathcal{B}$, where $\Sigma(\mathsf{M})$ is the \textit{suspension} of $\mathsf{M}$. As we show below (and as has been remarked in \cite{OWE20}), in the specific case of $\mathcal{B} = \mathsf{Cat}$ such graded monads coincide with those described in \cite{FKM16}. In the latter paper, two notions of \textit{algebra} for a graded monad over $\mathsf{Cat}$ are introduced. While these algebras satisfy respective universal properties, it is not yet clear if they may be described as lax (or colax) limits of the lax 2-functors corresponding to the graded monad. Additionally, examples of the algebras of \cite{FKM16} have been presented as \textit{graded theories} in \cite[Section 3.3]{Kura20}. Graded monads over $\mathsf{Cat}$ have also been called \textit{parametric monads} \cite{Mel2012, Mel2012a}.

The purpose of the present work is the following: as has been shown in, for example, \cite{Day74}, that the that the category of Kleisli algebras for a monoidal monad carries a monoidal structure, we aim to specialise those graded monads over $\mathsf{Cat}$ whose category of ``Kleisli algebras'' carry a monoidal structure. This is the node on the right in our diagram above. We closely imitate and adapt several of the results and definitions of \cite{Zawa12}.

Let $\mathcal{D}$ be a 2-category with finite products of 0-cells. In such a 2-category $\mathcal{D}$, we can talk about \textit{monoidal objects}, \textit{(op)lax monoidal 1-cells}, and \textit{monoidal 2-cells}, as we talk about monoidal categories, (op)lax monoidal functors, and monoidal natural transformations in the 2-category $\mathsf{Cat}$. 

\begin{definition}{\label{monoidalObject}}
A \textit{monoidal object} in a 2-category $\mathcal{D}$ with finite products of 0-cells is a tuple $(C, \otimes_C, I_C, \alpha_C, \lambda_C, \rho_C)$ in which $C$ is a 0-cell
\begin{align*}
    \otimes_C: C \times C \longrightarrow C \qquad \text{and} \qquad I_C: 1_{\mathcal{D}} \longrightarrow C
\end{align*}
(where $1_{\mathcal{D}}$ is the terminal object of $\mathcal{D}$) are 1-cells, and
 \begin{align*}
        \alpha_C: \otimes_C \cdot (1_C \times \otimes_C) \longrightarrow &\otimes_C \cdot (\otimes_C \cdot 1_C) \qquad \qquad \lambda_C: \otimes_C \cdot \langle I_C, 1_C \rangle \longrightarrow 1_C \\ \\
        &\rho_C: \otimes_C \cdot \langle 1_C, I_C \rangle \longrightarrow 1_C
\end{align*}
are 2-cells which together make obvious pentagon and triangular diagrams commute. A \textit{lax monoidal morphism of monoidal objects} consists of a triple $\big(F, \phi, \overline{\phi}\big): (C, \otimes_C, I_C, \alpha_C, \lambda_C, \rho_C) \longrightarrow (C^\prime, \otimes_{C^\prime}, I_{C^\prime}, \alpha_C{^\prime}, \lambda_{C^\prime}, \rho_{C^\prime})$ in which:
\begin{itemize}
    \item $F: C \longrightarrow C^\prime$ is a 1-cell;
    \item $\phi: \otimes_{C^\prime} \cdot (F \times F) \longrightarrow F \cdot \otimes_C$ and $\overline{\phi}: I_{C^\prime} \longrightarrow F \cdot I_{C}$ are 2-cells
\end{itemize}
such that the following diagrams commute:

\begin{equation}\label{laxFunc1}
\begin{tikzpicture}[baseline=(current  bounding  box.center), scale = 1.5]

\node (A) at (0, 0) {$\otimes_{C^\prime} \cdot (F \times F ) \cdot (1_{C} \times \otimes_{C})$};
\node (B) at (0, 1.5) {$\otimes_{C^\prime} \cdot (1_{C^\prime} \times \otimes_{C^\prime}) \cdot (F \times F \times F)$};
\node (E) at (0, -1.5) {$F \cdot \otimes_{C} \cdot (1_{C} \times \otimes_{C} )$};
\node (C) at (5, 0) {$\otimes_{C^\prime} \cdot (F \times F) \cdot (\otimes_{C} \times 1_{C} )$};
\node (D) at (5, 1.5) {$\otimes_{C^\prime} \cdot (\otimes_{C^\prime} \times 1_{C^\prime} ) \cdot (F \times F \times F)$};
\node (F) at (5, -1.5) {$F \cdot \otimes_{C} \cdot (\otimes_{C} \times 1_{C} )$};

\path[->,font=\scriptsize,>=angle 90]
(B) edge node[left] {$\otimes_{C^\prime}(1_F \times \phi)$} (A)
(B) edge node[above] {$\alpha_{{C^\prime}{F, F, F}}$} (D)
(D) edge node[right] {$\otimes_{C^\prime}(\phi \times 1_F)$} (C)
(A) edge node[left] {$\phi(1_C \times \otimes_C)$} (E)
(C) edge node[right] {$\phi(\otimes_C \times 1_C)$} (F)
(E) edge node[below] {$F\alpha_{C}$} (F);
\end{tikzpicture}
\end{equation}

\begin{equation}\label{laxFunc2}
\begin{tikzpicture}[baseline=(current  bounding  box.center), scale = 1.5]

\node (A) at (0, 0) {$\otimes_{C^\prime} \cdot (F \times F) \cdot \langle 1_{C}, I_{C}\rangle$};
\node (B) at (0, 1.5) {$\otimes_{C^\prime} \cdot \langle 1_{C^\prime}, I_{C^\prime}\rangle \cdot F$};
\node (C) at (5, 0) {$F \cdot \otimes_C \cdot \langle 1_C, I_{C}\rangle$};
\node (D) at (5, 1.5) {$F$};

\path[->,font=\scriptsize,>=angle 90]
(B) edge node[left] {$\otimes_{C^\prime}(1_F \times \overline{\phi})$} (A)
(A) edge node[below] {$\phi \langle 1_C, I_C\rangle$} (C)
(B) edge node[above] {$\rho_{C^\prime}F$} (D)
(C) edge node[right] {$F\rho_{C}$} (D);
\end{tikzpicture}
\end{equation}

\begin{equation}\label{laxFunc3}
\begin{tikzpicture}[baseline=(current  bounding  box.center), scale = 1.5]

\node (A) at (0, 0) {$\otimes_{C^\prime} \cdot (F \times F) \cdot \langle I_{C}, 1_{C}\rangle$};
\node (B) at (0, 1.5) {$\otimes_{C^\prime} \cdot \langle I_{C^\prime}, 1_{C^\prime}\rangle \cdot F$};
\node (C) at (5, 0) {$F \cdot \otimes_C \cdot \langle I_{C}, 1_{C}\rangle$};
\node (D) at (5, 1.5) {$F$};

\path[->,font=\scriptsize,>=angle 90]
(B) edge node[left] {$\otimes_{C^\prime}(\overline{\phi} \times 1_F)$} (A)
(A) edge node[below] {$\phi \langle I_{C}, 1_{C}\rangle$} (C)
(B) edge node[above] {$\lambda_{C^\prime}F$} (D)
(C) edge node[right] {$F\lambda_{C}$} (D);
\end{tikzpicture}
\end{equation}

\noindent An \textit{oplax monoidal morphism of monoidal objects} consists of a triple $\big(F, \phi, \overline{\phi}\big): (C, \otimes_C, I_C, \alpha_C, \lambda_C, \rho_C) \longrightarrow (C^\prime, \otimes_{C^\prime}, I_{C^\prime}, \alpha_C{^\prime}, \lambda_{C^\prime}, \rho_{C^\prime})$ in which:
\begin{itemize}
    \item $F: C \longrightarrow C^\prime$ is a 1-cell;
    \item $\phi: F \cdot \otimes_C \longrightarrow \otimes_{C^\prime} \cdot (F \times F)$ and $\overline{\phi}:  F \cdot I_{C} \longrightarrow I_{C^\prime}$ are 2-cells
\end{itemize}
(note the opposite direction) making similar diagrams commute. A \textit{transformation of lax monoidal morphisms} $\tau: (F, \phi, \overline{\phi}) \longrightarrow (F^\prime, \phi^\prime, \overline{\phi^\prime})$ is a 2-cell $\tau: F \longrightarrow F^\prime$ such that the following diagrams commute:
\begin{equation}\label{transLaxMorph1}
\begin{tikzpicture}[baseline=(current  bounding  box.center), scale = 1.5]

\node (A) at (0, 0) {$F \cdot \otimes_C$};
\node (B) at (0, 1.5) {$\otimes_{C^\prime} \cdot (F \times F)$};
\node (C) at (4, 0) {$F^\prime \cdot \otimes_C$};
\node (D) at (4, 1.5) {$\otimes_{C^\prime} \cdot (F^\prime \times F^\prime)$};

\path[->,font=\scriptsize,>=angle 90]
(B) edge node[left] {$\phi$} (A)
(A) edge node[below] {$\tau \otimes_C$} (C)
(B) edge node[above] {$\otimes_{C^\prime}(\tau \times \tau)$} (D)
(D) edge node[right] {$\phi^\prime$} (C);
\end{tikzpicture}
\end{equation}

\begin{equation}\label{transLaxMorph2}
\begin{tikzpicture}[baseline=(current  bounding  box.center), scale = 1.5]

\node (A) at (0, 0) {$I_{C^\prime}$};
\node (B) at (-1.5, -1.5) {$F^\prime \cdot I_{C}$};
\node (C) at (1.5, -1.5) {$F \cdot I_{C}$};

\path[->,font=\scriptsize,>=angle 90]
(A) edge node[above left] {$\overline{\phi^\prime}$} (B)
(C) edge node[below] {$\tau I_C$} (B)
(A) edge node[above right] {$\overline{\phi}$} (C);
\end{tikzpicture}
\end{equation}
There are similar transformations of oplax morphisms.
\end{definition}

Denoting $2\mathsf{Cat}_\times$ the sub-3-category of $2\mathsf{Cat}$ full on 2-natural transformations and modifications whose 0-cells are 2-categories with finite products of 0-cells, and 1-cells are 2-functors preserving finite products of 0-cells, we have 3-functors:
\begin{align*}
    \mathsf{Mon}: 2\mathsf{Cat}_\times \longrightarrow 2\mathsf{Cat}_\times
\end{align*}
and
\begin{align*}
    \mathsf{Mon}_\text{op}: 2\mathsf{Cat}_\times \longrightarrow 2\mathsf{Cat}_\times
\end{align*}
given on objects by sending each 2-category $\mathcal{D}$ with finite products of 0-cells, to the 2-category $\mathsf{Mon}(\mathcal{D})$ (resp. $\mathsf{Mon}_\text{op}(\mathcal{D})$) of monoidal objects in $\mathcal{D}$, lax (resp. oplax) monoidal morphisms, and transformations of lax (resp. oplax) monoidal morphisms. There are also the obvious canonical forgetful 2-functors
\begin{align*}
    U_\mathcal{D}: \mathsf{Mon}(\mathcal{D}) \longrightarrow \mathcal{D}
\end{align*}
and 
\begin{align*}
    U_{\text{op}, \mathcal{D}}: \mathsf{Mon}_\text{op}(\mathcal{D}) \longrightarrow \mathcal{D}
\end{align*}
Furthermore, these 2-functors determine 3-natural transformations $U: \mathsf{Mon} \longrightarrow \mathsf{Id}_{2\mathsf{Cat}_\times}$ and $U_\text{op}: \mathsf{Mon}_\text{op} \longrightarrow \mathsf{Id}_{2\mathsf{Cat}_\times}$.

\begin{definition}
For a monoidal category $\mathsf{M}$, there is a bicategory $\Sigma(\mathsf{M})$ called the \textit{suspension} of $\mathsf{M}$. The bicategory $\Sigma(\mathsf{M})$ has one object $*$ and $\Sigma(\mathsf{M})(*, *) = \mathsf{M}$, and with composition given by the monoidal structure.
\end{definition}

Let us next show that, for a monoidal category $(\mathsf{M}, \otimes_M, I_M)$, an $\mathsf{M}$-graded monad (over $\mathsf{Cat}$) is nothing but a lax 2-functor $\Sigma(\mathsf{M}) \longrightarrow \mathsf{Cat}$. For, to give such a lax 2-functor (see, for example, \cite[Ch. 7.5]{Bor94}) is to give:
\begin{itemize}
    \item A category $\mathbf{C}$;
    \item A functor $F: \mathsf{M} \longrightarrow [\mathbf{C}, \mathbf{C}]$, whose value at some object $X \in \mathsf{M}$ we denote by $F_{X}$;
    \item A natural transformation $\gamma: \circ \cdot (F \times F) \longrightarrow F \cdot \otimes_M$, where, to distinguish compositions, $\circ$ is the composition of functors in $[\mathbf{C}, \mathbf{C}]$;
    \item A natural transformation $\delta: 1_{\mathbf{C}} \longrightarrow F_{I_M}$;
\end{itemize}
such that, for all objects $X, Y, Z$ in $\mathsf{M}$, the following diagrams commute:

\begin{equation}\label{first}
\begin{tikzpicture}[baseline=(current  bounding  box.center), scale = 1.5]

\node (D) at (0, 0) {$F_{X \otimes_M Y}F_Z$};
\node (B) at (0, 1.5) {$F_XF_YF_Z$};
\node (C) at (4, 0) {$F_{X \otimes_M Y \otimes_M Z}$};
\node (A) at (4, 1.5) {$F_XF_{Y \otimes_M Z}$};

\path[->,font=\scriptsize,>=angle 90]
(B) edge node[above] {$F_X\gamma_{Y, Z}$} (A)
(A) edge node[right] {$\gamma_{X, Y \otimes_M Z}$} (C)
(B) edge node[left] {$\gamma_{X, Y}F_Z$} (D)
(D) edge node[below] {$\gamma_{X \otimes_M Y, Z}$} (C);
\end{tikzpicture}
\end{equation}

\begin{equation}\label{second}
\begin{tikzpicture}[baseline=(current  bounding  box.center), scale = 1.5]

\node (D) at (0, 0) {$F_X$};
\node (B) at (0, 1.5) {$F_X \cdot 1_{\mathbf{C}}$};
\node (C) at (3, 0) {$F_{X \otimes_M I_M}$};
\node (A) at (3, 1.5) {$F_XF_{I_M}$};

\path[->,font=\scriptsize,>=angle 90]
(B) edge node[above] {$F_X\delta$} (A)
(A) edge node[right] {$\gamma_{X, I_M}$} (C)
(C) edge node[below] {$F_{\rho_X}$} (D);

\draw (B) edge[double equal sign distance] (D);
\end{tikzpicture}
\end{equation}

\begin{equation}\label{third}
\begin{tikzpicture}[baseline=(current  bounding  box.center), scale = 1.5]

\node (D) at (0, 0) {$F_X$};
\node (B) at (0, 1.5) {$1_{\mathbf{C}} \cdot F_X $};
\node (C) at (3, 0) {$F_{I_M \otimes_M X}$};
\node (A) at (3, 1.5) {$F_{I_M}F_X$};

\path[->,font=\scriptsize,>=angle 90]
(B) edge node[above] {$\delta F_X$} (A)
(A) edge node[right] {$\gamma_{I_M, X}$} (C)
(C) edge node[below] {$F_{\lambda_X}$} (D);

\draw (B) edge[double equal sign distance] (D);
\end{tikzpicture}
\end{equation}

\noindent A simple comparison with Definition \ref{monoidalObject} above easily shows then that this is nothing but a lax monoidal functor $F: \mathsf{M} \longrightarrow [\mathbf{C}, \mathbf{C}]$, where $[\mathbf{C}, \mathbf{C}]$ carries the obvious strict monoidal structure. Inspired by this example, we have the following:

\begin{definition}
For a monoidal category $(\mathsf{M}, \otimes_M, I_M)$ and bicategory $\mathcal{B}$, an $\mathsf{M}$-\textit{graded monad over} $\mathcal{B}$ is a lax 2-functor $\Sigma(\mathsf{M}) \longrightarrow \mathcal{B}$. To give such a lax 2-functor is equivalently to give a quadruple $(C, F, \gamma, \delta)$ in which:
\begin{itemize}
    \item $C$ is an object of $\mathcal{D}$;
    \item $F: \mathsf{M} \longrightarrow \text{End}_{\mathcal{D}}(C)$ is a functor, whose value at some object $X \in \mathsf{M}$ we denote by $F_{X}$;
    \item A natural transformation $\gamma: \circ \cdot (F \times F) \longrightarrow F \cdot \otimes_M$;
    \item A 2-cell $\delta: 1_{C} \longrightarrow F_{I_M}$;
\end{itemize}
such that, for all objects $X, Y, Z$ in $\mathsf{M}$, the similar diagrams of (\ref{first}), (\ref{second}) and (\ref{third}) commute. 
\end{definition}

We now give our main definition.

\begin{definition}\label{commGradMonad}
For a monoidal category $(\mathsf{M}, \otimes_M, I_M)$ and 2-category $\mathcal{D}$ with finite products of 0-cells, a \textit{commutative} $\mathsf{M}$-\textit{graded monad over} $\mathcal{D}$ is a lax 2-functor $ \Sigma(\mathsf{M}) \longrightarrow \mathsf{Mon}(\mathcal{D})$. Equivalently, this consists of an 11-tuple
\begin{align*}
    (C, \otimes_C, I_C, \alpha_C, \lambda_C, \rho_C, F, \Phi, \overline{\Phi}, \Gamma, \Delta)
\end{align*}
in which:
\begin{itemize}
    \item $(C, \otimes_C, I_C, \alpha_C, \lambda_C, \rho_C)$ (which we will abbreviate as $(C, \otimes_C, I_C)$ when the context is clear) is a monoidal object of $\mathcal{D}$;
    \item $(F, \Phi, \overline{\Phi})$ is a functor \begin{align*}
        \mathsf{M} \longrightarrow \text{End}_{\mathsf{Mon}(\mathcal{D})}\big((C, \otimes_C, I_C)\big)
    \end{align*}
    whose value at some object $X \in \mathsf{M}$ is the lax monoidal morphism denoted by $(F, \Phi, \overline{\Phi})(X) = (F_X, \Phi_X, \overline{\Phi}_X)$;
    \item $\Gamma: \circ \cdot (F, \Phi, \overline{\Phi}) \times (F, \Phi, \overline{\Phi}) \longrightarrow (F, \Phi, \overline{\Phi}) \cdot \otimes_M$ is a natural transformation;
    \item $\Delta: (1_{C}, 1, 1) \longrightarrow (F_{I_M}, \Phi_{I_M}, \overline{\Phi}_{I_M})$ is a transformation of lax monoidal morphisms
\end{itemize}
such that, for all objects $X, Y, Z$ in $\mathsf{M}$, the similar diagrams of (\ref{first}), (\ref{second}) and (\ref{third}) commute. 
\end{definition}

Unwinding this definition more, we have that, for each $X \in \mathsf{M}$, $F_X: C \longrightarrow C$ is a 1-cell, while $\Phi_X: F_X \cdot \otimes_C \longrightarrow \otimes_{C} \cdot (F_X \times F_X)$ and $\overline{\Phi}_X:  F_X \cdot I_{C} \longrightarrow I_{C}$ are 2-cells such that $(F_X, \Phi_X, \overline{\Phi}_X): (C, \otimes_C, I_C) \longrightarrow (C, \otimes_C, I_C)$ is a lax monoidal morphism. For each morphism $f: X \longrightarrow Y$ in $\mathsf{M}$, $(F, \Phi, \overline{\Phi})(f)$ -- which we denote by $Ff$ -- is a 2-cell (in $\mathcal{D}$) $F_X \longrightarrow F_Y$ which is additionally a transformation of lax monoidal morphisms $(F_X, \Phi_X, \overline{\Phi}_X) \longrightarrow (F_Y, \Phi_Y, \overline{\Phi}_Y)$. It is now clear that, for example, sending $X$ to $F_X$ and $f$ to $Ff$ determines a functor $\mathsf{M} \longrightarrow \text{End}_{\mathcal{D}}(C)$.

Furthermore, for each  pair $X$, $Y$ of objects of $\mathsf{M}$, $\Gamma_{X, Y}$ is a 2-cell (in $\mathcal{D}$) $F_XF_Y \longrightarrow F_{X \otimes_M Y}$ which additionally is a transformation of lax monoidal morphisms
\begin{align*}
    \big(F_XF_Y, F_X \Phi_Y \cdot \Phi_X(F_Y \times F_Y), F_X \overline{\Phi}_Y \cdot \overline{\Phi}_X \big) \longrightarrow \big(F_{X \otimes_M Y}, \Phi_{X \otimes_M Y}, \overline{\Phi}_{X \otimes_M Y}\big)
\end{align*}
Again, clearly, $\Gamma$ is a natural transformation
\begin{align*}
    \circ \cdot (F \times F) \longrightarrow F \cdot \otimes_M
\end{align*}
Finally, $\Delta: 1_C \longrightarrow F_{I_M}$ is a 2-cell which is additionally a transformation of lax monoidal morphisms $(1_{C}, 1, 1) \longrightarrow (F_{I_M}, \Phi_{I_M}, \overline{\Phi}_{I_M})$. Therefore we immediately conclude the obvious result:

\begin{proposition}
Each commutative $\mathsf{M}$-graded monad over a 2-category $\mathcal{D}$ is an $\mathsf{M}$-graded monad over $\mathcal{D}$.
\end{proposition}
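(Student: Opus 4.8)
The plan is to realise the passage from commutative $\mathsf{M}$-graded monads to $\mathsf{M}$-graded monads as post-composition with the forgetful 2-functor $U_{\mathcal{D}}: \mathsf{Mon}(\mathcal{D}) \longrightarrow \mathcal{D}$ introduced above. By Definition \ref{commGradMonad}, a commutative $\mathsf{M}$-graded monad over $\mathcal{D}$ is a lax 2-functor $G: \Sigma(\mathsf{M}) \longrightarrow \mathsf{Mon}(\mathcal{D})$, while an $\mathsf{M}$-graded monad over $\mathcal{D}$ is a lax 2-functor $\Sigma(\mathsf{M}) \longrightarrow \mathcal{D}$. Since $U_{\mathcal{D}}$ is a strict 2-functor (it acts as the identity on underlying 1-cells and 2-cells, merely forgetting the monoidal data and the structure 2-cells $\phi, \overline{\phi}$ of a lax monoidal morphism), and since the composite of a lax 2-functor with a strict 2-functor is again a lax 2-functor, the composite $U_{\mathcal{D}} \circ G: \Sigma(\mathsf{M}) \longrightarrow \mathcal{D}$ is a lax 2-functor. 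This is precisely an $\mathsf{M}$-graded monad over $\mathcal{D}$, which proves the statement.

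To make the correspondence concrete I would then unwind $U_{\mathcal{D}} \circ G$ in terms of the $11$-tuple $(C, \otimes_C, I_C, \alpha_C, \lambda_C, \rho_C, F, \Phi, \overline{\Phi}, \Gamma, \Delta)$ of Definition \ref{commGradMonad}. The resulting quadruple is $(C, F, \Gamma, \Delta)$, where $C$ is now viewed as a bare 0-cell of $\mathcal{D}$; where the functor $\mathsf{M} \longrightarrow \text{End}_{\mathcal{D}}(C)$ is the post-composite of the functor $\mathsf{M} \longrightarrow \text{End}_{\mathsf{Mon}(\mathcal{D})}\big((C, \otimes_C, I_C)\big)$ with the functor that $U_{\mathcal{D}}$ induces on these endomorphism categories; and where $\Gamma$ and $\Delta$ are the underlying 2-cells of the transformations of lax monoidal morphisms of the same name. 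As already observed in the discussion preceding the proposition, these underlying 2-cells assemble into a natural transformation $\circ \cdot (F \times F) \longrightarrow F \cdot \otimes_M$ and a 2-cell $1_C \longrightarrow F_{I_M}$ respectively, so $(C, F, \Gamma, \Delta)$ has exactly the shape of the data of an $\mathsf{M}$-graded monad over $\mathcal{D}$.

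Finally, the coherence diagrams (\ref{first}), (\ref{second}) and (\ref{third}) required of $(C, F, \Gamma, \Delta)$ are precisely the images under $U_{\mathcal{D}}$ of the ``similar diagrams'' already imposed on $G$ in Definition \ref{commGradMonad}; as $U_{\mathcal{D}}$ is a strict 2-functor it carries these commuting diagrams to commuting diagrams, so there is nothing further to verify. The only point needing (minor) care is the bookkeeping that $U_{\mathcal{D}}$ is strict rather than merely lax, so that the structure 2-cells $\Gamma, \Delta$ of $G$ are transported across unchanged and no nontrivial comparison 2-cells are introduced; this is immediate from the construction of $U_{\mathcal{D}}$ in the paragraph following Definition \ref{monoidalObject}.
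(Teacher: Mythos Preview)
Your proof is correct and matches the paper's approach: the paper does not give a separate proof of this proposition but rather states it as an immediate consequence of the unwinding of Definition~\ref{commGradMonad} that precedes it, which is exactly what your second and third paragraphs spell out. Your framing via post-composition with the strict 2-functor $U_{\mathcal{D}}$ is a clean conceptual packaging of the same observation.
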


\begin{example}
Take $\mathsf{M}$ to be the terminal category $\mathbbm{1}$, and $\mathcal{D}$ to be the 2-category $\mathsf{Cat}$. Applying this to Definition \ref{commGradMonad} gives a monoidal monad. That is, a commutative monad.
\end{example}

\begin{definition}\label{morphismGradedMonad}
Let $\mathsf{M}$ be a monoidal category and $\mathcal{D}$ a 2-category. Given $\mathsf{M}$-graded monads $(C, F, \gamma, \delta)$ and $(D, G, \mu, \eta)$ over $\mathcal{D}$, a \textit{lax morphism of} $\mathsf{M}$-\textit{graded monads} $(C, F, \gamma, \delta) \longrightarrow (D, G, \mu, \eta)$ is a lax natural transformation of the corresponding lax 2-functors. It consists of a pair $(\Omega, \omega)$ in which:
\begin{itemize}
    \item $\Omega: C \longrightarrow D$ is a 1-cell in $\mathcal{D}$;
    \item $\omega: \text{hom}_{\mathcal{D}}(C, \Omega) \cdot F \longrightarrow \text{hom}_{\mathcal{D}}(\Omega, C) \cdot G$ is a natural transformation -- that is, a family of 2-cells $\big(\omega_X : \Omega F_X \longrightarrow G_X \Omega\big)_{X \in \mathsf{M}}$
\end{itemize}
such that, for all objects $X, Y$ in $\mathsf{M}$, the following diagrams commute:

\begin{equation}\label{laxMorphGrad1}
\begin{tikzpicture}[baseline=(current  bounding  box.center), scale = 1.5]

\node (D) at (0, 0) {$\Omega F_{X \otimes_M Y}$};
\node (B) at (0, 1.5) {$\Omega F_XF_Y$};
\node (C) at (6, 0) {$G_{X \otimes_M Y}\Omega$};
\node (A) at (6, 1.5) {$G_XG_Y\Omega$};
\node (E) at (3, 1.5) {$G_X\Omega F_Y$};

\path[->,font=\scriptsize,>=angle 90]
(B) edge node[above] {$\omega_X F_Y$} (E)
(E) edge node[above] {$G_X \omega_Y$} (A)
(A) edge node[right] {$\mu_{X, Y}\Omega$} (C)
(B) edge node[left] {$\Omega\gamma_{X, Y}$} (D)
(D) edge node[below] {$\omega_{X \otimes_M Y}$} (C);
\end{tikzpicture}
\end{equation}

\begin{equation}\label{laxMorphGrad2}
\begin{tikzpicture}[baseline=(current  bounding  box.center), scale = 1.5]

\node (D) at (0, 0) {$\Omega \cdot 1_{C}$};
\node (B) at (0, 1.5) {$\Omega$};
\node (C) at (5, 0) {$\Omega F_{I_M}$};
\node (A) at (5, 1.5) {$G_{I_M}\Omega$};
\node (E) at (2.5, 1.5) {$1_{D} \cdot \Omega$};

\path[->,font=\scriptsize,>=angle 90]
(E) edge node[above] {$\eta \Omega$} (A)
(C) edge node[right] {$\omega_{I_M}$} (A)
(D) edge node[below] {$\Omega \delta$} (C);

\draw (B) edge[double equal sign distance] (D);
\draw (B) edge[double equal sign distance] (E);
\end{tikzpicture}
\end{equation}
\end{definition}

One obtains \textit{oplax morphisms of graded monads} as oplax natural transformations of the corresponding lax 2-functors. Note in particular, that an oplax morphism of graded monads $(C, F, \gamma, \delta) \longrightarrow (D, G, \mu, \eta)$ consists of a pair $(\Omega, \omega)$ in which:
\begin{itemize}
    \item $\Omega: C \longrightarrow D$ is a 1-cell in $\mathcal{D}$;
    \item $\omega$ is a family of 2-cells $\big(\omega_X :G_X \Omega  \longrightarrow \Omega F_X \big)_{X \in \mathsf{M}}$ (note the direction change!) natural in $X$
\end{itemize}
making similar diagrams commute.

\begin{definition}
For the $\mathsf{M}$-graded monads $(C, F, \gamma, \delta)$ and $(D, G, \mu, \eta)$ over a 2-category $\mathcal{D}$ in Definition \ref{morphismGradedMonad} above, and lax morphisms of $\mathsf{M}$-graded monads $(\Omega, \omega), (\Xi, \xi) : (C, F, \gamma, \delta) \longrightarrow (D, G, \mu, \eta)$, a \textit{transformation of lax morphisms of} $\mathsf{M}$-\textit{graded monads} $\beta: (\Omega, \omega) \longrightarrow (\Xi, \xi)$ is a modification of the corresponding lax natural transformations. That is, a 2-cell $\beta : \Omega \longrightarrow \Xi$ such that, for every pair $X, Y$ of objects, and every morphism $f: X \longrightarrow Y$ of $\mathsf{M}$, the following diagram commutes:

\begin{equation}\label{transOfLaxMor}
\begin{tikzpicture}[baseline=(current  bounding  box.center), scale = 1.5]

\node (A) at (0, 0) {$\Omega F_X$};
\node (B) at (0, 1.5) {$G_X \Omega$};
\node (C) at (3, 0) {$\Xi F_Y$};
\node (D) at (3, 1.5) {$G_Y \Xi$};

\path[->,font=\scriptsize,>=angle 90]
(B) edge node[left] {$\omega_X$} (A)
(A) edge node[below] {$\beta (Ff)$} (C)
(B) edge node[above] {$(Gf) \beta$} (D)
(D) edge node[right] {$\xi_Y$} (C);
\end{tikzpicture}
\end{equation}
\end{definition}

In a similar fashion, we obtain transformations of oplax morphisms. Once again, we have 3-functors:
\begin{align*}
    \Sigma_{\mathsf{M}}\mathsf{Mnd}: 2\mathsf{Cat} \longrightarrow 2\mathsf{Cat}
\end{align*}
and
\begin{align*}
    \Sigma_{\mathsf{M}}\mathsf{Mnd}_\text{op}: 2\mathsf{Cat} \longrightarrow 2\mathsf{Cat}
\end{align*}
given on objects by sending each 2-category $\mathcal{D}$, to the 2-category $\Sigma_{\mathsf{M}}\mathsf{Mnd}(\mathcal{D})$ (resp. $\Sigma_{\mathsf{M}}\mathsf{Mnd}_\text{op}(\mathcal{D})$) of $\mathsf{M}$-graded monads over $\mathcal{D}$, lax morphisms (resp. oplax) of graded monads, and transformations of lax (resp. oplax) morphisms of graded monads.

We now turn to algebras for $\mathsf{M}$-graded monads. Our definition is based on the fact that, given a (formal) monad $(C, t)$ in a 2-category $\mathcal{D}$, the \textit{Eilenberg-Moore object} for $(C, t)$ (if it exists) is nothing but the lax-limit of the lax 2-functor $\mathbf{1} \longrightarrow \mathcal{D}$ (as an object of the 2-category of lax 2-functors, lax natural transformations and modifications) corresponding to the monad, while the \textit{Kleisli object} for $(C, t)$ is the lax-colimit of the lax 2-functor (as an object of the 2-category of lax 2-functors, oplax natural transformations and oplax modifications) corresponding to the monad. Furthermore, in \cite[Theorem 13]{Street72}, Street shows that the two constructions of the paper (thought of as generalisations of the \textit{construction of Eilenberg-Moore algebras} and the \textit{construction of Kleisli algebras} \cite{Street72ftm} for a formal monad) capture the lax-limits (resp. lax-colimits) of so-called \textit{lax functors} (resp. \textit{oplax functors}) $\mathbf{A} \longrightarrow \mathsf{Cat}$, for a category $\mathbf{A}$ and $\mathsf{Cat}$ the 2-category of categories, functors and natural transformations. We, too, think of these notions as synonymous.

\begin{definition}
For an $\mathsf{M}$-graded monad $(C, F, \gamma, \delta)$ over a 2-category $\mathcal{D}$, the \textit{Eilenberg-Moore object} of the graded monad is the lax limit (if it exists) of the corresponding lax 2-functor as an object in $\Sigma_{\mathsf{M}}\mathsf{Mnd}(\mathcal{D})$. Similarly, the \textit{Kleisli object} of the graded monad is the lax colimit of the corresponding lax 2-functor as an object in $\Sigma_{\mathsf{M}}\mathsf{Mnd}_\text{op}(\mathcal{D})$.
\end{definition}

When all such lax limits (resp. lax colimits) exists in a particular 2-category, we have the following terminology.

\begin{definition}\label{EilenbergMoore}
For a monoidal category $\mathsf{M}$ and a 2-category $\mathcal{D}$, we say that $\mathcal{D}$ \textit{admits the construction of Eilenberg-Moore algebras} if the canonical inclusion 2-functor $\Delta: \mathcal{D} \longrightarrow \Sigma_{\mathsf{M}}\mathsf{Mnd}(\mathcal{D})$
has a right 2-adjoint
\begin{align*}
    \mathsf{EM} : \Sigma_{\mathsf{M}}\mathsf{Mnd}(\mathcal{D}) \longrightarrow  \mathcal{D} 
\end{align*}
Dually, $\mathcal{D}$ \textit{admits the construction of Kleisli algebras} if the inclusion 2-functor $\Delta_{\text{op}} : \mathcal{D} \longrightarrow \Sigma_{\mathsf{M}}\mathsf{Mnd}_\text{op}(\mathcal{D})$
has a left 2-adjoint
\begin{align*}
    \mathsf{K} : \Sigma_{\mathsf{M}}\mathsf{Mnd}_\text{op}(\mathcal{D}) \longrightarrow  \mathcal{D}
\end{align*}
\end{definition}

\section{Lifting EM and Kleisli objects under $\mathsf{Mon}$}

Note that, for a monoidal category $\mathsf{M}$, 2-categories $\mathcal{D}$ and $\mathcal{C}$ and a 2-functor $G: \mathcal{D} \longrightarrow \mathcal{C}$, the 2-functor \begin{align*}
    \Sigma_{\mathsf{M}}\mathsf{Mnd}_\text{op}(G): \Sigma_{\mathsf{M}}\mathsf{Mnd}_\text{op}(\mathcal{D}) \longrightarrow \Sigma_{\mathsf{M}}\mathsf{Mnd}_\text{op}(\mathcal{C})
\end{align*}
is given by sending an $\mathsf{M}$-graded monad $\mathcal{F}$ over $\mathcal{D}$ -- considered as a lax 2-functor $\Sigma(\mathsf{M}) \longrightarrow \mathcal{D}$ -- to the $\mathsf{M}$-graded monad $G \cdot \mathcal{F}$ over $\mathcal{C}$.
\begin{definition}
For a monoidal category $\mathsf{M}$, 2-categories $\mathcal{D}$ and $\mathcal{C}$ which admit the construction of Kleisli algebras, and a fixed 2-functor $G: \mathcal{D} \longrightarrow \mathcal{C}$, $G$ \textit{preserves Kleisli objects in its domain} when the diagram below
\begin{equation*}
\begin{tikzpicture}[baseline=(current  bounding  box.center), scale = 1.5]

\node (DT) at (0, 1.8) {$\Sigma_{\mathsf{M}}\mathsf{Mnd}_\text{op}(\mathcal{D})$};
\node (CS) at (3, 1.8) {$\mathcal{D}$};
\node (D) at (0, 0) {$\Sigma_{\mathsf{M}}\mathsf{Mnd}_\text{op}(\mathcal{C})$};
\node (C) at (3, 0) {$\mathcal{C}$};

\path[->,font=\scriptsize,>=angle 90]
(DT) edge[bend right] node[below] {$\mathsf{K}_{\mathcal{D}}$} (CS)
(CS) edge[bend right] node[above] {$\Delta_{\text{op}, \mathcal{D}}$} (DT)
(DT) edge node[left] {$\Sigma_{\mathsf{M}}\mathsf{Mnd}_\text{op}(G)$} (D)
(CS) edge node[right] {$G$} (C)
(D) edge[bend right] node[below] {$\mathsf{K}_{\mathcal{C}}$} (C)
(C) edge[bend right] node[above] {$\Delta_{\text{op}, \mathcal{C}}$} (D);
\end{tikzpicture}
\end{equation*}

\noindent commutes up to the canonical isomorphism. If this is the case when  $\mathcal{D}$ is the 2-category $\mathsf{Mon}(\mathcal{C})$, and $G = U_{\mathcal{C}}: \mathsf{Mon}(\mathcal{C}) \longrightarrow \mathcal{C}$ is the canonical forgetful 2-functor, we say that $\mathsf{Mon}(\mathcal{C})$ has \textit{standard Kleisli objects with respect to} $\mathcal{C}$. Dually, we may define \textit{standard Eilenberg-Moore objects with respect to} $\mathcal{C}$.
\end{definition}

\begin{proposition}\label{prop}
For a monoidal category $\mathsf{M}$ and 2-category $\mathcal{D}$, there is a canonical isomorphism of 2-categories
\begin{align*}
    \xi_\mathcal{D}: \Sigma_{\mathsf{M}}\mathsf{Mnd}_\text{op}\big(\mathsf{Mon}(\mathcal{D})\big) \longrightarrow \mathsf{Mon}\big(\Sigma_{\mathsf{M}}\mathsf{Mnd}_\text{op}(\mathcal{D})\big)
\end{align*}
which makes the following diagram commute
\begin{equation}\label{permutationIso}
\begin{tikzpicture}[baseline=(current  bounding  box.center), scale = 1.5]

\node (A) at (0, 0) {$\mathsf{Mon}\big(\Sigma_{\mathsf{M}}\mathsf{Mnd}_\text{op}(\mathcal{D})\big)$};
\node (B) at (0, 1.5) {$\mathsf{Mon}(\mathcal{D})$};
\node (C) at (4, 0) {$\Sigma_{\mathsf{M}}\mathsf{Mnd}_\text{op}(\mathcal{D})$};
\node (D) at (4, 1.5) {$\Sigma_{\mathsf{M}}\mathsf{Mnd}_\text{op}\big(\mathsf{Mon}(\mathcal{D})\big)$};

\path[->,font=\scriptsize,>=angle 90]
(B) edge node[left] {$\mathsf{Mon}(\Delta_{\text{op}, \mathcal{D}})$} (A)
(A) edge node[below] {$U_{\Sigma_{\mathsf{M}}\mathsf{Mnd}_\text{op}(\mathcal{D})}$} (C)
(B) edge node[above] {$\Delta_{\text{op}, \mathsf{Mon}(\mathcal{D})}$} (D)
(D) edge node[right] {$\Sigma_{\mathsf{M}}\mathsf{Mnd}_\text{op}(U_{\mathcal{D}})$} (C)
(D) edge node[above left] {$\xi_\mathcal{D}$} (A);
\end{tikzpicture}
\end{equation}
\end{proposition}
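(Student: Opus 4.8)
The plan is to prove the isomorphism by a direct unwinding: both $2$-categories carry literally the same underlying data in $\mathcal{D}$, merely grouped in a different order, so $\xi_{\mathcal{D}}$ will be the bijective re-bracketing of this data and $\xi_{\mathcal{D}}^{-1}$ the reverse re-bracketing. Because $\xi_{\mathcal{D}}$ will act as the identity on the underlying $1$- and $2$-cells of $\mathcal{D}$, functoriality and the commutativity of (\ref{permutationIso}) will then be immediate. Before anything else I would record the product structure on $\Sigma_{\mathsf{M}}\mathsf{Mnd}_\text{op}(\mathcal{D})$ (needed even to state the right-hand side): when $\mathcal{D}$ has finite products of $0$-cells, the product of graded monads $(C,F,\gamma,\delta)$ and $(C',F',\gamma',\delta')$ is $(C\times C', F\times F', \gamma\times\gamma', \delta\times\delta')$, the terminal one is $(1_{\mathcal{D}}, \mathrm{const}\,1, \ldots)$, and the projections and universal $1$-cells are the oplax morphisms whose $1$-cell part is the corresponding projection/pairing in $\mathcal{D}$ and whose $\omega$-family consists of identities; in particular $\Sigma_{\mathsf{M}}\mathsf{Mnd}_\text{op}$ restricts to $2\mathsf{Cat}_\times\to 2\mathsf{Cat}_\times$.

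\textbf{Comparison of $0$-cells.} A $0$-cell of the left-hand side is an $\mathsf{M}$-graded monad over the $2$-category $\mathsf{Mon}(\mathcal{D})$; since the $0$-cells of $\Sigma_{\mathsf{M}}\mathsf{Mnd}(\mathcal{D})$ and $\Sigma_{\mathsf{M}}\mathsf{Mnd}_\text{op}(\mathcal{D})$ coincide, this is exactly a commutative $\mathsf{M}$-graded monad over $\mathcal{D}$ in the sense of Definition \ref{commGradMonad}, i.e. an $11$-tuple $(C,\otimes_C,I_C,\alpha_C,\lambda_C,\rho_C,F,\Phi,\overline{\Phi},\Gamma,\Delta)$. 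A $0$-cell of the right-hand side is a monoidal object in $\Sigma_{\mathsf{M}}\mathsf{Mnd}_\text{op}(\mathcal{D})$: a graded monad $(C,F,\gamma,\delta)$ together with oplax morphisms $\otimes\colon(C,F,\gamma,\delta)^2\to(C,F,\gamma,\delta)$, $I\colon 1\to(C,F,\gamma,\delta)$ and transformations of oplax morphisms $\alpha,\lambda,\rho$ obeying pentagon and triangle. Using the explicit products above, the $1$-cell part of $\otimes$ is a $1$-cell $\otimes_C\colon C\times C\to C$ in $\mathcal{D}$ and its $\omega$-family is precisely the datum $\Phi$; similarly $I$ unwinds to $(I_C,\overline{\Phi})$, and $\alpha,\lambda,\rho$ to $2$-cells $\alpha_C,\lambda_C,\rho_C$ in $\mathcal{D}$. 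I would then match the axioms clause by clause as equations of $2$-cells in $\mathcal{D}$: the oplax-morphism axioms for $\otimes$ and $I$ (the opposites of (\ref{laxMorphGrad1}) and (\ref{laxMorphGrad2})) together with naturality of their $\omega$-families become exactly the conditions that $\Gamma$ and $\Delta$ are transformations of lax monoidal morphisms and that $\Phi,\overline{\Phi}$ are natural in $X$ (i.e. (\ref{transLaxMorph1}) and (\ref{transLaxMorph2}) for each $Ff$); the conditions that $\alpha_C,\lambda_C,\rho_C$ are transformations of oplax morphisms become the lax-monoidal-morphism coherences (\ref{laxFunc1})--(\ref{laxFunc3}) for each $(F_X,\Phi_X,\overline{\Phi}_X)$ (the non-identity cases of the indexing morphism following from these plus the naturality just mentioned); the pentagon and triangle in $\Sigma_{\mathsf{M}}\mathsf{Mnd}_\text{op}(\mathcal{D})$ become the pentagon and triangle for $\alpha_C,\lambda_C,\rho_C$ in $\mathcal{D}$; and the diagrams (\ref{first})--(\ref{third}) read in $\mathsf{Mon}(\mathcal{D})$ reduce to the same diagrams for $(C,F,\gamma,\delta)$ in $\mathcal{D}$. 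Thus the two lists of data-plus-axioms coincide, and $\xi_{\mathcal{D}}$ on $0$-cells is this identification.

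\textbf{Cells, functoriality, and (\ref{permutationIso}).} The analogous unwinding for $1$-cells shows that on both sides a $1$-cell amounts to a lax monoidal morphism $(\Omega,\phi^\Omega,\overline{\phi^\Omega})$ in $\mathcal{D}$ together with a family $\omega_X\colon G_X\Omega\to\Omega F_X$ natural in $X$, subject to matching compatibilities; for $2$-cells, on both sides a $2$-cell is a single $2$-cell $\beta\colon\Omega\to\Xi$ in $\mathcal{D}$ compatible with the $\phi$'s and with the $\omega$'s. Since identities and both kinds of composition in $\Sigma_{\mathsf{M}}\mathsf{Mnd}_\text{op}(-)$ and in $\mathsf{Mon}(-)$ are computed by pasting the underlying cells in the base $2$-category, $\xi_{\mathcal{D}}$ and its inverse re-bracketing are mutually inverse $2$-functors. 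Finally, both triangles of (\ref{permutationIso}) commute strictly: $\Delta_{\text{op},\mathsf{Mon}(\mathcal{D})}$ sends a monoidal object to the discrete graded monad on it ($F_X$ all identities, all structure $2$-cells identities), which $\xi_{\mathcal{D}}$ re-brackets into the monoidal object in $\Sigma_{\mathsf{M}}\mathsf{Mnd}_\text{op}(\mathcal{D})$ with discrete underlying graded monad on $C$ and monoidal structure given by the discrete oplax morphisms on $\otimes_C$ and $I_C$ --- that is exactly $\mathsf{Mon}(\Delta_{\text{op},\mathcal{D}})$ applied to the original --- while both $U_{\Sigma_{\mathsf{M}}\mathsf{Mnd}_\text{op}(\mathcal{D})}\cdot\xi_{\mathcal{D}}$ and $\Sigma_{\mathsf{M}}\mathsf{Mnd}_\text{op}(U_{\mathcal{D}})$ forget everything except the graded-monad datum $(C,F,\gamma,\delta)$.

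\textbf{Main obstacle.} The proof has no conceptual content beyond the fact that $\mathsf{Mon}$ and $\Sigma_{\mathsf{M}}\mathsf{Mnd}_\text{op}$ are both ``syntactic'' constructions whose cells are cells of $\mathcal{D}$ cut out by equational axioms, so the only place an error can enter is the clause-by-clause axiom matching of the second step. There one must be scrupulous about two things: the lax/oplax bookkeeping --- it is precisely because $\mathsf{Mon}$ uses the \emph{lax} monoidal directions while $\Sigma_{\mathsf{M}}\mathsf{Mnd}_\text{op}$ uses the \emph{oplax} morphism directions that the two orders of application are compatible (pairing $\mathsf{Mon}$ with $\Sigma_{\mathsf{M}}\mathsf{Mnd}$ instead would clash) --- and the ``migration'' of hypotheses between the two groupings, whereby naturality of $\Phi,\overline{\Phi}$ and the associator/unitor coherences (\ref{laxFunc1})--(\ref{laxFunc3}) on the $\mathsf{Mon}$-first side reappear as clauses of the oplax-morphism and transformation-of-oplax-morphism axioms on the $\Sigma_{\mathsf{M}}\mathsf{Mnd}_\text{op}$-first side. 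I would carry out this matching carefully once, for $0$-cells, and observe that the $1$-cell, $2$-cell and functoriality verifications are strictly easier instances of the same bookkeeping.
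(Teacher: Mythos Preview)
Your proposal is correct and follows essentially the same approach as the paper: a direct clause-by-clause unwinding showing that both sides package the same underlying data and axioms in $\mathcal{D}$, merely re-bracketed. You are in fact more thorough than the paper in several places---you explicitly record the finite-product structure on $\Sigma_{\mathsf{M}}\mathsf{Mnd}_\text{op}(\mathcal{D})$, you address the $1$- and $2$-cells and functoriality (the paper says only ``similar verifications may also be made''), and you verify the two triangles of~(\ref{permutationIso}) directly, which the paper does not spell out.
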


\begin{proof}
A 0-cell of $\Sigma_{\mathsf{M}}\mathsf{Mnd}_\text{op}\big(\mathsf{Mon}(\mathcal{D})\big)$ is precisely a commutative $\mathsf{M}$-graded monad over $\mathcal{D}$ as we have described in Definition \ref{commGradMonad}. On the other hand, a 0-cell of $\mathsf{Mon}\big(\Sigma_{\mathsf{M}}\mathsf{Mnd}_\text{op}(\mathcal{D})\big)$ is also an 11-tuple
\begin{align}\label{11tuple}
    \big((D, G, \mu, \eta), (\otimes_D, \phi), (I_D, \overline{\phi}), \alpha_D, \lambda_D, \rho_D\big)
\end{align}
in which:
\begin{itemize}
    \item $(D, G, \mu, \eta)$ is an $\mathsf{M}$-graded monad over $\mathcal{D}$;
    \item $(\otimes_D, \phi): (D \times D, (G, G), (\mu, \mu), (\eta, \eta)) \longrightarrow (D, G, \mu, \eta)$ is an oplax morphism of graded monads;
    \item $(I_D, \overline{\phi}): (1_{\mathcal{D}}, 1, 1, 1) \longrightarrow (D, G, \mu, \eta)$ is an oplax morphism of graded monads;
    \item $\alpha_D$, $\lambda_D$, $\rho_D$ are transformations of oplax morphisms of graded monads.
\end{itemize}
which altogether make (\ref{11tuple}) a monoidal object in $\Sigma_{\mathsf{M}}\mathsf{Mnd}_\text{op}(\mathcal{D})$. Now, routine but long checks show that 
\begin{align*}
    (D, \otimes_D, I_D, \alpha_D, \lambda_D, \rho_D)   
\end{align*}
is a monoidal object in $\mathcal{D}$ exactly because $(\otimes_D, \phi)$, $\alpha_D$, $\lambda_D$ and $\rho_D$ satisfy similar pentagonal and triangular diagrams. 

Furthermore, since $\alpha_D$, $\lambda_D$ and $\rho_D$ are transformations of oplax morphisms of graded monads, they satisfy oplax versions of the diagram (\ref{transOfLaxMor}), which is exactly to say that for each $X \in \mathsf{M}$, the triple
\begin{align*}
    (G_X, \phi_X , \overline{\phi}_X)     
\end{align*}
satisfies the oplax versions diagrams of (\ref{laxFunc1}), (\ref{laxFunc2}) and (\ref{laxFunc3}). That is, for each $X \in \mathsf{M}$, $(G_X, \phi_X , \overline{\phi}_X): (D, \otimes_D, I_D) \longrightarrow (D, \otimes_D, I_D)$ is an oplax monoidal morphism. Also, since $G: \mathsf{M} \longrightarrow \text{End}_{\mathcal{D}}(D)$ is a functor and because both of
\begin{align*}
    \phi:  \text{hom}_{\mathcal{D}}(\otimes_D, D) \cdot G \longrightarrow  \text{hom}_{\mathcal{D}}(D \times D, \otimes_D) \cdot (G, G)
\end{align*}
and
\begin{align*}
    \overline{\phi}: \text{hom}_{\mathcal{D}}(I_D, D) \cdot G \longrightarrow  \text{hom}_{\mathcal{D}}(1_{\mathcal{D}}, I_D)
\end{align*}
are natural transformations, for each morphism $f: X \longrightarrow Y$ of $\mathsf{M}$, $Gf$ satisfies the oplax versions of diagrams (\ref{transLaxMorph1}) and (\ref{transLaxMorph2}), and so $Gf: (G_X, \phi_X , \overline{\phi}_X) \longrightarrow (G_Y, \phi_Y , \overline{\phi}_Y)$ is a transformation of oplax morphisms. In total, this is to say that the assignments
\begin{align*}
    X \longmapsto (G_X, \phi_X, \overline{\phi}_X) \quad \text{ , } \quad f \longmapsto G_f
\end{align*}
define a functor $(G, \phi, \overline{\phi}) : \mathsf{M} \longrightarrow \text{End}_{\mathsf{Mon}(\mathcal{D})}\big((D, \otimes_D, I_D)\big)$.

Next to say that $\mu: \circ \cdot (G \times G) \longrightarrow G \cdot \otimes_M$ is a natural transformation, and that $(\otimes_D, \phi)$ and $(I_D, \overline{\phi})$ both satisfy oplax versions of (\ref{laxMorphGrad1}) is the same as to say that 
\begin{align*}
    \mu: \circ \cdot (G, \phi, \overline{\phi}) \times (G, \phi, \overline{\phi}) \longrightarrow (G, \phi, \overline{\phi}) \cdot \otimes_M
\end{align*}
is a natural transformation making a similar diagram to (\ref{first}) commute.

Finally, to say that $\eta: 1_{D} \longrightarrow G_{I_M}$ is a 2-cell such that $(\otimes_D, \phi)$ and $(I_D, \overline{\phi})$ both satisfy oplax versions of (\ref{laxMorphGrad2}) is the same as to say that 
\begin{align*}
    \eta: (1_{D}, 1, 1) \longrightarrow (G_{I_M}, \phi_{I_M}, \overline{\phi}{I_M})
\end{align*}
is a transformation oplax morphisms, making similar diagrams to (\ref{second}) and (\ref{third}) commute. 

Similar verifications may also be made for the associated 1- and 2-cells in each the above categories.
\end{proof}

\begin{theorem}
Let $\mathcal{D}$ be a 2-category with finite products of 0-cells that admits the construction of Kleisli algebras. Furthermore, suppose the left 2-adjoint $\mathsf{K}_{\mathcal{D}}: \Sigma_{\mathsf{M}}\mathsf{Mnd}_\text{op}(\mathcal{D}) \longrightarrow \mathcal{D}$ preserves finite products of 0-cells in its domain. Then, the 2-category $\mathsf{Mon}(\mathcal{D})$ admits the construction of Kleisli algebras and they are standard with respect to $\mathcal{D}$.
\end{theorem}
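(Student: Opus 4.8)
The plan is to obtain the whole statement as a formal consequence of Proposition \ref{prop} together with the elementary fact that any $3$-functor preserves $2$-adjunctions.

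First I would check that both legs of the given $2$-adjunction are $1$-cells of $2\mathsf{Cat}_\times$. The left adjoint $\mathsf{K}_{\mathcal{D}}$ preserves finite products of $0$-cells by hypothesis, while the inclusion $\Delta_{\text{op}, \mathcal{D}} : \mathcal{D} \longrightarrow \Sigma_{\mathsf{M}}\mathsf{Mnd}_\text{op}(\mathcal{D})$, being a right $2$-adjoint (it is right adjoint to $\mathsf{K}_{\mathcal{D}}$), preserves limits and in particular finite products of $0$-cells. Hence the $2$-adjunction $\mathsf{K}_{\mathcal{D}} \dashv \Delta_{\text{op}, \mathcal{D}}$ — with its $2$-natural unit and counit and the triangle identities — lies entirely within $2\mathsf{Cat}_\times$, and we may apply the $3$-functor $\mathsf{Mon} : 2\mathsf{Cat}_\times \longrightarrow 2\mathsf{Cat}_\times$ to it. Since a $3$-functor sends $1$-, $2$- and $3$-cells to the same and preserves the pastings that witness the triangle identities, this produces a $2$-adjunction
\begin{align*}
    \mathsf{Mon}(\mathsf{K}_{\mathcal{D}}) \dashv \mathsf{Mon}(\Delta_{\text{op}, \mathcal{D}}) : \mathsf{Mon}(\mathcal{D}) \longrightarrow \mathsf{Mon}\big(\Sigma_{\mathsf{M}}\mathsf{Mnd}_\text{op}(\mathcal{D})\big).
\end{align*}

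Next I would transport this along $\xi_\mathcal{D}$. The upper triangle of diagram (\ref{permutationIso}) reads $\xi_\mathcal{D} \cdot \Delta_{\text{op}, \mathsf{Mon}(\mathcal{D})} = \mathsf{Mon}(\Delta_{\text{op}, \mathcal{D}})$, i.e. $\Delta_{\text{op}, \mathsf{Mon}(\mathcal{D})} = \xi_\mathcal{D}^{-1} \cdot \mathsf{Mon}(\Delta_{\text{op}, \mathcal{D}})$. Composing the $2$-adjunction above with the $2$-isomorphism $\xi_\mathcal{D}$ on the appropriate side then shows that $\Delta_{\text{op}, \mathsf{Mon}(\mathcal{D})}$ has the left $2$-adjoint
\begin{align*}
    \mathsf{K}_{\mathsf{Mon}(\mathcal{D})} := \mathsf{Mon}(\mathsf{K}_{\mathcal{D}}) \cdot \xi_\mathcal{D} : \Sigma_{\mathsf{M}}\mathsf{Mnd}_\text{op}\big(\mathsf{Mon}(\mathcal{D})\big) \longrightarrow \mathsf{Mon}(\mathcal{D}),
\end{align*}
which is precisely to say that $\mathsf{Mon}(\mathcal{D})$ admits the construction of Kleisli algebras.

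It then remains to check standardness, i.e. that $U_{\mathcal{D}} \cdot \mathsf{K}_{\mathsf{Mon}(\mathcal{D})}$ and $\mathsf{K}_{\mathcal{D}} \cdot \Sigma_{\mathsf{M}}\mathsf{Mnd}_\text{op}(U_{\mathcal{D}})$ agree up to the canonical isomorphism. For this I would use $3$-naturality of $U : \mathsf{Mon} \longrightarrow \mathsf{Id}_{2\mathsf{Cat}_\times}$ at the $1$-cell $\mathsf{K}_{\mathcal{D}}$, giving $U_{\mathcal{D}} \cdot \mathsf{Mon}(\mathsf{K}_{\mathcal{D}}) = \mathsf{K}_{\mathcal{D}} \cdot U_{\Sigma_{\mathsf{M}}\mathsf{Mnd}_\text{op}(\mathcal{D})}$, together with the lower triangle of (\ref{permutationIso}), namely $U_{\Sigma_{\mathsf{M}}\mathsf{Mnd}_\text{op}(\mathcal{D})} \cdot \xi_\mathcal{D} = \Sigma_{\mathsf{M}}\mathsf{Mnd}_\text{op}(U_{\mathcal{D}})$; pasting these yields
\begin{align*}
    U_{\mathcal{D}} \cdot \mathsf{K}_{\mathsf{Mon}(\mathcal{D})} = \mathsf{K}_{\mathcal{D}} \cdot \Sigma_{\mathsf{M}}\mathsf{Mnd}_\text{op}(U_{\mathcal{D}})
\end{align*}
on the nose, which is exactly the assertion that $U_{\mathcal{D}}$ preserves Kleisli objects in its domain. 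I do not expect a genuine obstacle: the conceptual content is already packaged in Proposition \ref{prop}, and the only points needing a moment's care are the two bits of bookkeeping above — that $\Delta_{\text{op}, \mathcal{D}}$ lands in $2\mathsf{Cat}_\times$ so that $\mathsf{Mon}$ may be applied to the adjunction as a whole, and that a $3$-functor genuinely carries a $2$-adjunction to a $2$-adjunction.
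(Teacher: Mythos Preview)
Your proposal is correct and follows essentially the same route as the paper: apply the $3$-functor $\mathsf{Mon}$ to the given $2$-adjunction $\mathsf{K}_{\mathcal{D}} \dashv \Delta_{\text{op}, \mathcal{D}}$, transport along the isomorphism $\xi_{\mathcal{D}}$ of Proposition~\ref{prop}, and invoke $3$-naturality of $U$ for standardness. Your write-up is in fact a little more careful than the paper's --- you make explicit that $\Delta_{\text{op}, \mathcal{D}}$ preserves finite products of $0$-cells (as a right $2$-adjoint), which is needed for the adjunction to live in $2\mathsf{Cat}_\times$, and you spell out the pasting giving $U_{\mathcal{D}} \cdot \mathsf{K}_{\mathsf{Mon}(\mathcal{D})} = \mathsf{K}_{\mathcal{D}} \cdot \Sigma_{\mathsf{M}}\mathsf{Mnd}_\text{op}(U_{\mathcal{D}})$.
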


\begin{proof}
Since $\mathsf{Mon}$ is a 3-functor, it sends 2-adjunctions to 2-adjunctions. Therefore, in the diagram below, because $\mathsf{K}_{\mathcal{D}}$ preserves finite products of 0-cells we have that $\mathsf{Mon}(\mathsf{K}_\mathcal{D}) \dashv \mathsf{Mon}(\Delta_{\text{op}, \mathcal{D}})$.
\begin{equation*}
\begin{tikzpicture}[baseline=(current  bounding  box.center), scale = 1.5]

\node (B) at (0, 0) {$\Sigma_{\mathsf{M}}\mathsf{Mnd}_\text{op}(\mathcal{D})$};
\node (HIB) at (4, 0) {$\mathcal{D}$};
\node (BxHIA) at (0, 2) {$\mathsf{Mon}\big(\Sigma_{\mathsf{M}}\mathsf{Mnd}_\text{op}(\mathcal{D})\big)$};
\node (HIA) at (4, 2) {$\mathsf{Mon}(\mathcal{D})$};
\node (A) at (-1.9, 3.8) {$\Sigma_{\mathsf{M}}\mathsf{Mnd}_\text{op}\big(\mathsf{Mon}(\mathcal{D})\big)$};

\path[->,font=\scriptsize,>=angle 90]
(B.353) edge node[below] {$\mathsf{K}_{\mathcal{D}}$} (HIB.206)
(HIB.162) edge node[above] {$\Delta_{\text{op}, \mathcal{D}}$} (B.4)
(BxHIA) edge node[right] {$U_{\Sigma_{\mathsf{M}}\mathsf{Mnd}_\text{op}(\mathcal{D})}$} (B)
(HIA) edge node[right] {$U_{\mathcal{D}}$} (HIB)
(BxHIA.356) edge node[below] {$\mathsf{Mon}(\mathsf{K}_{\mathcal{D}})$} (HIA.187)
(HIA.169) edge node[above] {$\mathsf{Mon}(\Delta_{\text{op}, \mathcal{D}})$} (BxHIA.4)
(HIA) edge[bend right] node[above right] {$\Delta_{\text{op}, \mathsf{Mon}(\mathcal{D})}$} (A)
(A) edge[bend right] node[left] {$\Sigma_{\mathsf{M}}\mathsf{Mnd}_\text{op}(U_{\mathcal{D}})$} (B)
(A) edge node[above right] {$\xi_{\mathcal{D}}$} (BxHIA);
\end{tikzpicture}
\end{equation*}
Next, the diagram is commutative because diagram (\ref{permutationIso}) of Proposition \ref{prop} is commutative and because $U: \mathsf{Mon} \longrightarrow \mathsf{Id}$ is a 3-natural transformation. Finally, since $\xi_{\mathcal{D}}$ is an isomorphism of 2-categories, one has that
\begin{align*}
    \mathsf{Mon}(\mathsf{K}_\mathcal{D}) \cdot \xi_{\mathcal{D}} \dashv \Delta_{\text{op}, \mathsf{Mon}(\mathcal{D})}
\end{align*}
and so $\mathsf{Mon}(\mathcal{D})$ admits the construction of Kleisli algebras which are standard with respect to $\mathcal{D}$.
\end{proof}

Since the 2-functor $\mathsf{EM}_{\mathcal{D}}: \Sigma_{\mathsf{M}}\mathsf{Mnd}(\mathcal{D}) \longrightarrow \mathcal{D}$ is a right 2-adjoint, it automatically preserves finite products of 0-cells in its domain. Therefore, we have a slightly stronger result in the dual case.

\begin{theorem}\label{mainTheorem}
Let $\mathcal{D}$ be a 2-category with finite products of 0-cells that admits the construction of Eilenberg-Moore algebras. Then, the 2-category $\mathsf{Mon}_{\text{op}}(\mathcal{D})$ admits the construction of Eilenberg-Moore algebras and they are standard with respect to $\mathcal{D}$.
\end{theorem}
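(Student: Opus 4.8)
The plan is to mirror the proof of the previous theorem, exploiting the fact that $\mathsf{Mon}_{\text{op}}$ is a 3-functor (and hence sends 2-adjunctions to 2-adjunctions), together with the ``oplax'' analogue of Proposition \ref{prop}. By hypothesis the inclusion $\Delta_{\mathcal{D}}: \mathcal{D} \longrightarrow \Sigma_{\mathsf{M}}\mathsf{Mnd}(\mathcal{D})$ has a right 2-adjoint $\mathsf{EM}_{\mathcal{D}}$. Applying the 3-functor $\mathsf{Mon}_{\text{op}}$ to this 2-adjunction yields a 2-adjunction
\begin{align*}
    \mathsf{Mon}_{\text{op}}(\Delta_{\mathcal{D}}) \dashv \mathsf{Mon}_{\text{op}}(\mathsf{EM}_{\mathcal{D}}) : \mathsf{Mon}_{\text{op}}\big(\Sigma_{\mathsf{M}}\mathsf{Mnd}(\mathcal{D})\big) \longrightarrow \mathsf{Mon}_{\text{op}}(\mathcal{D}).
\end{align*}
Here, unlike the Kleisli case, no side condition on the adjoint is needed, because a right 2-adjoint automatically preserves the finite products of 0-cells that $\mathsf{Mon}_{\text{op}}$ requires its 1-cells to preserve; this is exactly the observation made in the paragraph preceding the statement, and it is why the Eilenberg--Moore version is unconditional.

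Next I would establish the oplax counterpart of Proposition \ref{prop}: a canonical isomorphism of 2-categories
\begin{align*}
    \xi_{\mathcal{D}}^{\text{op}}: \Sigma_{\mathsf{M}}\mathsf{Mnd}\big(\mathsf{Mon}_{\text{op}}(\mathcal{D})\big) \longrightarrow \mathsf{Mon}_{\text{op}}\big(\Sigma_{\mathsf{M}}\mathsf{Mnd}(\mathcal{D})\big)
\end{align*}
fitting into a commuting square relating $\mathsf{Mon}_{\text{op}}(\Delta_{\mathcal{D}})$, $\Delta_{\mathsf{Mon}_{\text{op}}(\mathcal{D})}$, $U_{\text{op},\Sigma_{\mathsf{M}}\mathsf{Mnd}(\mathcal{D})}$ and $\Sigma_{\mathsf{M}}\mathsf{Mnd}(U_{\text{op},\mathcal{D}})$, just as in diagram (\ref{permutationIso}). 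The verification is the same ``reshuffling of an 11-tuple'' argument as in Proposition \ref{prop}, now using lax 2-functors into $\mathsf{Mon}_{\text{op}}(\mathcal{D})$ (so that gradings are lax and the monoidal data is oplax) and matching it against monoidal objects and oplax monoidal morphisms in $\Sigma_{\mathsf{M}}\mathsf{Mnd}(\mathcal{D})$; the commuting diagrams for a graded monad over $\mathsf{Mon}_{\text{op}}(\mathcal{D})$ are literally the same equations as for a monoidal object in $\Sigma_{\mathsf{M}}\mathsf{Mnd}(\mathcal{D})$, read in a different order.

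Finally I would assemble the pieces. Composing $\xi_{\mathcal{D}}^{\text{op}}$ with the 2-adjunction obtained above, and using that $\xi_{\mathcal{D}}^{\text{op}}$ is an isomorphism of 2-categories, gives
\begin{align*}
    \mathsf{Mon}_{\text{op}}(\mathsf{EM}_{\mathcal{D}}) \cdot \big(\xi_{\mathcal{D}}^{\text{op}}\big)^{-1} \dashv \big(\xi_{\mathcal{D}}^{\text{op}}\big) \cdot \mathsf{Mon}_{\text{op}}(\Delta_{\mathcal{D}}) = \Delta_{\mathsf{Mon}_{\text{op}}(\mathcal{D})},
\end{align*}
the last equality by the oplax analogue of the commuting square in Proposition \ref{prop}. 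This exhibits a right 2-adjoint to $\Delta_{\mathsf{Mon}_{\text{op}}(\mathcal{D})}: \mathsf{Mon}_{\text{op}}(\mathcal{D}) \longrightarrow \Sigma_{\mathsf{M}}\mathsf{Mnd}\big(\mathsf{Mon}_{\text{op}}(\mathcal{D})\big)$, so $\mathsf{Mon}_{\text{op}}(\mathcal{D})$ admits the construction of Eilenberg--Moore algebras. Standardness with respect to $\mathcal{D}$ — i.e.\ commutativity up to canonical isomorphism of the square relating the two $\mathsf{EM}$-adjoints with $U_{\text{op},\mathcal{D}}$ and $\Sigma_{\mathsf{M}}\mathsf{Mnd}(U_{\text{op},\mathcal{D}})$ — follows because $U_{\text{op}}: \mathsf{Mon}_{\text{op}} \longrightarrow \mathsf{Id}_{2\mathsf{Cat}_\times}$ is a 3-natural transformation and the square of Proposition \ref{prop} (oplax version) commutes, exactly as in the previous proof. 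The main obstacle is the bookkeeping in the oplax version of Proposition \ref{prop}: one must be careful that reversing the monoidal 2-cells turns the ``transformation of lax monoidal morphism'' conditions on $\alpha_D,\lambda_D,\rho_D$ into precisely the oplax coherence for each $G_X$, and that naturality of $\Gamma$ and $\Delta$ in $\mathsf{Mon}_{\text{op}}(\mathcal{D})$ unpacks correctly — but this is routine once the dictionary of Proposition \ref{prop} is in place.
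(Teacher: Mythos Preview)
Your proposal is correct and follows exactly the (implicit) argument the paper intends: it states the theorem immediately after observing that $\mathsf{EM}_{\mathcal{D}}$, being a right 2-adjoint, automatically preserves finite products, so one may simply dualise the Kleisli proof using $\mathsf{Mon}_{\text{op}}$ in place of $\mathsf{Mon}$ and the oplax analogue of Proposition~\ref{prop}. One small slip: in your displayed adjunction the direction and the placement of the inverse are off --- the composites as written do not type-check; what you want is $\Delta_{\mathsf{Mon}_{\text{op}}(\mathcal{D})} = (\xi_{\mathcal{D}}^{\text{op}})^{-1}\cdot \mathsf{Mon}_{\text{op}}(\Delta_{\mathcal{D}})$ and hence $\Delta_{\mathsf{Mon}_{\text{op}}(\mathcal{D})} \dashv \mathsf{Mon}_{\text{op}}(\mathsf{EM}_{\mathcal{D}})\cdot \xi_{\mathcal{D}}^{\text{op}}$, mirroring the paper's $\mathsf{Mon}(\mathsf{K}_\mathcal{D})\cdot \xi_{\mathcal{D}} \dashv \Delta_{\text{op}, \mathsf{Mon}(\mathcal{D})}$.
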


Since the 2-category $\mathsf{Cat}$ has all lax limits, we have the following.

\begin{corollary}
$\mathsf{Mon}_{\text{op}}(\mathsf{Cat})$ admits the construction of Eilenberg-Moore algebras and they are standard with respect to $\mathsf{Cat}$.
\end{corollary}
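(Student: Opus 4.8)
The plan is to obtain the Corollary as the special case $\mathcal{D} = \mathsf{Cat}$ of Theorem \ref{mainTheorem}. To invoke that theorem I must check exactly two things about $\mathsf{Cat}$: that it is a 2-category with finite products of 0-cells, and that it admits the construction of Eilenberg--Moore algebras in the sense of Definition \ref{EilenbergMoore}. Everything else, including the assertion that the resulting algebras are \emph{standard} with respect to $\mathsf{Cat}$, is then delivered verbatim by Theorem \ref{mainTheorem}.

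The first point is immediate: the cartesian product of categories is a 2-product in $\mathsf{Cat}$ and the one-object one-arrow category $\mathbbm{1}$ is its terminal 0-cell, so $\mathsf{Cat}$ has finite products of 0-cells. For the second point I must exhibit a right 2-adjoint $\mathsf{EM}_{\mathsf{Cat}}\colon \Sigma_{\mathsf{M}}\mathsf{Mnd}(\mathsf{Cat}) \longrightarrow \mathsf{Cat}$ to the inclusion $\Delta$. Unwinding Definition \ref{EilenbergMoore}, this amounts to showing that every $\mathsf{M}$-graded monad over $\mathsf{Cat}$ --- that is, every lax 2-functor $\Sigma(\mathsf{M}) \longrightarrow \mathsf{Cat}$ --- has a lax limit in the 2-category $\Sigma_{\mathsf{M}}\mathsf{Mnd}(\mathsf{Cat})$ of lax 2-functors, lax natural transformations and modifications, and that these lax limits may be chosen 2-naturally in the graded monad. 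Here the remark preceding the Corollary does the essential work: $\mathsf{Cat}$ is lax-complete, admitting all the relevant lax limits (equivalently, after rigidifying the lax functor one is computing an ordinary weighted 2-limit, and $\mathsf{Cat}$ is 2-complete). A pointwise family of such lax limits carrying the appropriate 2-dimensional universal property assembles, by the usual parametrised-limit argument, into the desired right 2-adjoint $\mathsf{EM}_{\mathsf{Cat}}$; hence $\mathsf{Cat}$ admits the construction of Eilenberg--Moore algebras.

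With both hypotheses verified, Theorem \ref{mainTheorem} applies with $\mathcal{D} = \mathsf{Cat}$ and yields at once that $\mathsf{Mon}_{\text{op}}(\mathsf{Cat})$ admits the construction of Eilenberg--Moore algebras and that these are standard with respect to $\mathsf{Cat}$ --- recall that in the proof of Theorem \ref{mainTheorem} the ``standard'' clause was free, since a right 2-adjoint automatically preserves finite products of 0-cells. The only step asking for the slightest care is the passage from the bare lax-completeness of $\mathsf{Cat}$ to the existence of an honest right 2-adjoint $\mathsf{EM}_{\mathsf{Cat}}$ (rather than a merely pointwise choice of lax limits); this is standard 2-category theory and I do not anticipate a genuine obstacle. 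In short, the Corollary is essentially a bookkeeping consequence of Theorem \ref{mainTheorem}, with lax-completeness of $\mathsf{Cat}$ supplying the single nontrivial input.
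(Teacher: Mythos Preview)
Your proposal is correct and follows exactly the paper's approach: the paper simply remarks that $\mathsf{Cat}$ has all lax limits and then states the corollary as an instance of Theorem~\ref{mainTheorem} with $\mathcal{D}=\mathsf{Cat}$. Your write-up is more detailed than the paper's one-line justification, but the content and strategy are the same.
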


\section{Commutative graded monads and localisable monads}

In \cite[Lemma 27]{CDH21}, it is shown that for a stiff monoidal category $\mathbf{C}$, $\mathsf{ZI}(\mathbf{C})$-graded monads correspond bijectively to formal monads in the 2-category $[\mathsf{ZI}(\mathbf{C})^\text{op}, \mathsf{Cat}]$. In fact, this may be upgraded to an isomorphism of 2-categories.

\begin{proposition}
For a stiff monoidal category $\mathbf{C}$, there is an isomorphism of 2-categories between the 2-category of (formal) monads, morphism of monads, and monad morphism transformations in $[\mathsf{ZI}(\mathbf{C})^\text{op}, \mathsf{Cat}]$, and the 2-category $\Sigma_{\mathsf{ZI}(\mathbf{C})}\mathsf{Mnd}_{\text{op}}(\mathsf{Cat})$.
\end{proposition}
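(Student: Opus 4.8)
The plan is to upgrade the $0$-cell bijection of \cite[Lemma 27]{CDH21} to a full isomorphism of $2$-categories. Abbreviate $\mathsf{M} = \mathsf{ZI}(\mathbf{C})$ and $\mathcal{K} = [\mathsf{M}^{\text{op}}, \mathsf{Cat}]$; since $\mathcal{K}$ is itself a $2$-category, the $2$-category of formal monads, their morphisms and monad morphism transformations in $\mathcal{K}$ is, in the notation of the previous section, nothing but $\Sigma_{\mathbbm{1}}\mathsf{Mnd}_{\text{op}}(\mathcal{K})$ -- where we take the variance of ``morphism of monads'' to be the one (monad opfunctors) for which the dictionary below works -- so the claim reads
\[
\Sigma_{\mathsf{M}}\mathsf{Mnd}_{\text{op}}(\mathsf{Cat}) \;\cong\; \Sigma_{\mathbbm{1}}\mathsf{Mnd}_{\text{op}}(\mathcal{K}),
\]
a statement in the same spirit as Proposition \ref{prop}: it moves the grading out of $\Sigma$ and into the base $2$-category. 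The first step is to record the $0$-cell bijection $\Theta$ of \cite[Lemma 27]{CDH21} explicitly, together with its inverse, so that all higher data can be transported along it: $\Theta$ sends a $\mathsf{ZI}(\mathbf{C})$-graded monad $(\mathbf{A},F,\gamma,\delta)$ over $\mathsf{Cat}$ to a formal monad $(\Theta\mathbf{A},\Theta t)$ in $\mathcal{K}$, and it is the reverse assignment that invokes stiffness of $\mathbf{C}$.

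Second, I would build the $1$-cell and $2$-cell dictionaries. An oplax morphism of $\mathsf{M}$-graded monads $(\Omega,\omega)\colon (\mathbf{A},F,\gamma,\delta) \to (\mathbf{B},G,\mu,\eta)$ consists of a functor $\Omega$ together with a family $\omega_X\colon G_X\Omega \to \Omega F_X$ natural in $X \in \mathsf{M}$ and subject to the oplax forms of (\ref{laxMorphGrad1}) and (\ref{laxMorphGrad2}). I claim this is exactly the data of a $1$-cell $f\colon \Theta\mathbf{A}\to\Theta\mathbf{B}$ in $\mathcal{K}$ together with a $2$-cell $\psi\colon (\Theta s)f \Rightarrow f(\Theta t)$ of $\mathcal{K}$ satisfying the two monad-opfunctor axioms: $\Omega$ transports to $f$ by the $0$-cell construction applied to carriers; the family $(\omega_X)_X$, by its naturality in $X$, re-assembles into a single $2$-cell $\psi$ of $\mathcal{K}$; and the oplax forms of (\ref{laxMorphGrad1}) and (\ref{laxMorphGrad2}) translate precisely to compatibility of $\psi$ with the multiplications $\Theta t,\Theta s$ and with the units. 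Similarly a transformation of oplax morphisms $\beta\colon(\Omega,\omega)\Rightarrow(\Omega',\omega')$ -- a single natural transformation $\beta\colon\Omega\Rightarrow\Omega'$ satisfying the oplax form of (\ref{transOfLaxMor}) -- should correspond to a modification $\Theta\beta$ in $\mathcal{K}$ between $f$ and $f'$ that is compatible with the two monads. One then checks that each of these assignments is a bijection, the inverse directions again leaning on stiffness exactly as at the level of $0$-cells.

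Third, I would verify that $\Theta$, so extended, is a $2$-functor: that it preserves identity $1$-cells and composition of $1$-cells, and identity $2$-cells, vertical composition and whiskering of $2$-cells. Each of these is a direct computation once the dictionaries of the previous step are in hand, because $\Theta$ acts by transporting carriers and re-packaging the grading data compatibly. A $2$-functor that is bijective on $0$-, $1$- and $2$-cells is an isomorphism of $2$-categories, which completes the argument. I expect the main obstacle to be the coherence bookkeeping of the second step rather than any conceptual difficulty: one must reconcile the differing \emph{shapes} of the higher data ($\omega$ is an $X$-indexed family of $2$-cells and $\beta$ a single natural transformation of functors on the graded side, versus a $1$-cell-equipped-with-a-$2$-cell and a modification in the presheaf $2$-category $\mathcal{K}$ on the other), and one must pin down the variance conventions so that oplax morphisms of graded monads match monad opfunctors and not monad functors. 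Stiffness of $\mathbf{C}$, already the crux of \cite[Lemma 27]{CDH21}, re-enters precisely to make the reverse $1$- and $2$-cell assignments bijective.
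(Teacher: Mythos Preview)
Your proposal is correct and follows essentially the same approach as the paper: invoke the $0$-cell bijection of \cite[Lemma~27]{CDH21}, then extend it to $1$- and $2$-cells by transporting the data and checking the relevant axioms.

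The difference is one of generality versus concreteness. The paper's proof restricts throughout to $1$-cells whose underlying component is the identity ($1_{\overline{\mathbf{C}}}$ on the presheaf side, $1_{\mathbf{C}}$ on the graded side) and exhibits the correspondence by writing down two explicit commutative diagrams that unfold the graded multiplication $\mu_{u,v}$ in terms of the formal-monad multiplication $\overline{\mu}$ via the unitors $\rho^{-1},\lambda^{-1}$ and the unique maps $!:0\to u$ in $\mathsf{ZI}(\mathbf{C})$; this is precisely where stiffness is used, though the paper does not flag it. The $2$-cell level is declared trivial, and $2$-functoriality is not separately verified. Your outline, by contrast, works with arbitrary carriers and arbitrary $\Omega$, isolates the opfunctor variance explicitly, and plans to check $2$-functoriality and the inverse assignments systematically. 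That is a cleaner and more complete argument, but the underlying idea---repackage an $X$-indexed family of $2$-cells as a single $2$-cell in $[\mathsf{ZI}(\mathbf{C})^{\text{op}},\mathsf{Cat}]$ and match the oplax axioms against the monad-opfunctor axioms---is the same as the paper's.
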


\begin{proof}

Suppose $(\overline{\mathbf{C}}, \overline{T}, \overline{\mu}^T, \overline{\eta}^T)$ and $(\overline{\mathbf{C}}, \overline{S}, \overline{\mu}^S, \overline{\eta}^S)$ are formal monads in $[\mathsf{ZI}(\mathbf{C})^\text{op}, \mathsf{Cat}]$. By \cite[Lemma 27]{CDH21}, these correspond bijectively to $\mathsf{ZI}(\mathbf{C})$-graded monads $(\mathbf{C}, T, \mu^T, \eta^T)$ and $(\mathbf{C}, S, \mu^S, \eta^S)$. For a morphism of monads
\begin{align*}
    (1_{\overline{\mathbf{C}}}, \overline{\phi}): (\overline{\mathbf{C}}, \overline{T}, \overline{\mu}^T, \overline{\eta}^T) \longrightarrow (\overline{\mathbf{C}}, \overline{S}, \overline{\mu}^S, \overline{\eta}^S)    
\end{align*}
we define an oplax morphism of $\mathsf{ZI}(\mathbf{C})$-graded monads $(1_\mathbf{C}, \phi): (\mathbf{C}, T, \mu^T, \eta^T) \longrightarrow (\mathbf{C}, S, \mu^S, \eta^S)$ as follows:

\begin{itemize}
    \item $1_\mathbf{C}$ is the usual identity functor;
    \item $\phi$ is the family of natural transformations $(\overline{\phi}_u: S_u \longrightarrow T_u)_{u \in \mathsf{ZI}(\mathbf{C})}$
\end{itemize}

\noindent The pair $(1_\mathbf{C}, \phi)$ indeed satisfies the oplax version of diagram (\ref{laxMorphGrad1}) because the following diagram commutes:

\begin{equation*}
\begin{tikzpicture}[baseline=(current  bounding  box.center), scale = 1.5]

\node (A) at (0, 0) {$FT_uT_v(A)$};
\node (B) at (3, 0) {$FT_{u\otimes 0}T_{0 \otimes v}(A)$};
\node (C) at (6, 0) {$FT^2_{u \otimes v}(A)$};
\node (D) at (0, 1) {$S_uT_v(A)$};
\node (E) at (3, 1) {$S_{u\otimes 0}T_{0 \otimes v}(A)$};
\node (F) at (6, 1) {$S_{u \otimes v}T_{u \otimes v}(A)$};
\node (H) at (0, 2) {$S_uS_vF(A)$};
\node (I) at (3, 2) {$S_{u \otimes 0}S_{0 \otimes v}F(A)$};
\node (J) at (6, 2) {$S^2_{u \otimes v}F(A)$};

\node (K) at (9, 2) {$S_{u \otimes v}F(A)$};
\node (L) at (9, 0) {$FT_{u \otimes v}(A)$};

\path[->,font=\scriptsize,>=angle 90]
(A) edge node[below] {$T_{\rho^{-1}}T_{\lambda^{-1}}(A)$} (B)
(B) edge node[below] {$T_{u\otimes !}T_{! \otimes v}(A)$} (C)

(D) edge node[below] {$S_{\rho^{-1}}T_{\lambda^{-1}}(A)$} (E)
(E) edge node[below] {$S_{u\otimes !}T_{! \otimes v}(A)$} (F)

(H) edge node[above] {$S_{\rho^{-1}}S_{\lambda^{-1}}(A)$} (I)
(I) edge node[above] {$S_{u\otimes !}S_{! \otimes v}(A)$} (J)

(D) edge node[left] {$\overline{\phi}_{u, T_{v}(A)}$} (A)

(E) edge node[right] {$\overline{\phi}_{u \otimes 0, T_{0 \otimes v}(A)}$} (B)

(H) edge node[left] {$S_{u}\overline{\phi}_{v, A}$} (D)
(I) edge node[left] {$S_{u \otimes 0}\overline{\phi}_{0 \otimes v, A}$} (E)
(J) edge node[right] {$S_{u \otimes v}\overline{\phi}_{u \otimes v, A}$} (F)
(F) edge node[right] {$\overline{\phi}_{u \otimes v, T_{u \otimes v}(A)}$} (C)
(J) edge node[above] {$\overline{\mu}^S_{u \otimes v, A}$} (K)
(C) edge node[below] {$\overline{\mu}^T_{u \otimes v, A}$} (L)
(K) edge node[right] {$\overline{\phi}_{u \otimes v, A}$} (L);
\end{tikzpicture}
\end{equation*}

A similar diagram will show the commutativity of the oplax version of diagram (\ref{laxMorphGrad2}).

On the other hand, given an oplax morphism of $\mathsf{ZI}(\mathbf{C})$-graded monads $(1_\mathbf{C}, \phi): (\mathbf{C}, T, \mu^T, \eta^T) \longrightarrow (\mathbf{C}, S, \mu^S, \eta^S)$, we get a morphism of monads $(\mathbf{C}, T, \mu^T, \eta^T) \longrightarrow (\mathbf{C}, S, \mu^S, \eta^S)$ because

\begin{equation*}
\begin{tikzpicture}[baseline=(current  bounding  box.center), scale = 1.5]

\node (A) at (0, 0) {$S_u(A)$};
\node (B) at (3, 2) {$S_uT_u(A)$};
\node (C) at (6, 0) {$T_u(A)$};
\node (D) at (0, 1) {$S_{u \otimes u}(A)$};
\node (F) at (6, 1) {$T_{u \otimes u}$};
\node (H) at (0, 2) {$S^2_u(A)$};
\node (J) at (6, 2) {$T^2_{u}(A)$};

\path[->,font=\scriptsize,>=angle 90]
(H) edge node[above] {$S_u\phi_{u, A}$} (B)
(D) edge node[below] {$\phi_{u \otimes u, A}$} (F)
(B) edge node[above] {$\phi_{u, T_{u}(A)}$} (J)
(A) edge node[below] {$\phi_{u, A}$} (C)
(H) edge node[left] {$\mu^S_{u, u, A}$} (D)
(D) edge node[left] {$S_{\nabla u, A}$} (A)

(J) edge node[right] {$\mu^T_{u, u, A}$} (F)
(F) edge node[right] {$T_{\nabla u, A}$} (C);
\end{tikzpicture}
\end{equation*}
commutes, where the top box commutes because $(1_\mathbf{C}, \phi)$ satisfies the oplax version of diagram (\ref{laxMorphGrad1}). 

Finally, it is trivial that transformations of such oplax morphisms of graded monads are exactly transformations of the corresponding morphisms of monads.
\end{proof}

Under the above isomorphism of 2-categories, one can see that the Eilenberg-Moore objects for formal monads in $[\mathsf{ZI}(\mathbf{C})^\text{op}, \mathsf{Cat}]$ correspond bijectively to the Eilenberg-Moore objects for the corresponding $\mathsf{ZI}(\mathbf{C})$-graded monads. For, denoting by $\Theta$ the above the above isomorphism of 2-categories, we have the following commutative diagram

\begin{equation*}
\begin{tikzpicture}[baseline=(current  bounding  box.center), scale = 1.5]

\node (A) at (0, -3.5) {$\mathsf{Cat}$};
\node (B) at (-2, -1.5) {$\Sigma_{\mathsf{ZI}(\mathbf{C})}\mathsf{Mnd}(\mathsf{Cat})$};
\node (C) at (2, -1.5) {$\mathsf{Mnd}\big([\mathsf{ZI}(\mathbf{C})^\text{op}, \mathsf{Cat}]\big)$};

\path[->,font=\scriptsize,>=angle 90]
(A) edge node[below left] {$\Delta_{\mathsf{Cat}}$} (B)
(C) edge node[above] {$\Theta$} (B)
(A) edge node[below right] {$\text{incl}$} (C);
\end{tikzpicture}
\end{equation*}
in which the inclusion 2-functor $\text{incl}: \mathsf{Cat} \longrightarrow \mathsf{Mnd}\big([\mathsf{ZI}(\mathbf{C})^\text{op}, \mathsf{Cat}]\big)$ is exactly the 2-functor of \cite[Theorem 13]{Street72}. Therefore, to give a right 2-adjoint to $\Delta_{\mathsf{Cat}}$ is to give a right 2-adjoint to $\text{incl}$. 
\section{Future work}

Our main interest in this work is in studying \textit{commutative} $\mathsf{ZI}(\mathbf{C})$-graded monads for a stiff monoidal category $\mathbf{C}$. Since we have the isomorphism $\Theta$ above, and a previously described isomorphism extending the bijective correspondence of \cite[Theorem 25]{CDH21}, we can ask, what are those localisable monads on a stiff monoidal category $\mathbf{C}$ which, as $\mathsf{ZI}(\mathbf{C})$-graded monads are in fact commutative $\mathsf{ZI}(\mathbf{C})$-graded monads? We may call such localisable monads \textit{commutative localisable monads}. 

For such commutative localisable monads, the Eilenberg-Moore objects for their corresponding commutative $\mathsf{ZI}(\mathbf{C})$-graded monads will have, by Theorem \ref{mainTheorem}, a monoidal structure inherited from $\mathbf{C}$. In turn, this should imply that the Eilenberg-Moore objects for their corresponding formal monads in $[\mathsf{ZI}(\mathbf{C})^\text{op}, \mathsf{Cat}]$ will have monoidal structures. This solves a previous open question posed in \cite{CDH21}.

We should be able to go further too. We should aim to establish a connection (perhaps an equivalence) between our construction of Eilenberg-Moore algebras in Definition \ref{EilenbergMoore} above and the Eilenberg-Moore objects of \cite[Section 3.1]{FKM16}. In particular, the latter have strong connections to graded theories in \cite[Theorem 20]{Kura20}. It is envisaged that we may be able to calculate free models for commutative graded monads corresponding to relatively simple commutative localisable monads. These models, having an additional commutative structure, may themselves serve as insightful models for certain aspects of concurrency.

\bibliographystyle{unsrt}
\bibliography{sample}

\end{document}